\newtheorem{lemma}{Lemma}[section]
\newtheorem{claim*}{Claim}
\newtheorem{thm}[lemma]{Theorem}
\theoremstyle{remark}
\newtheorem{example}[lemma]{Example}
\newcommand{\pdim}{\operatorname{pdim}}
\newcommand{\Tor}{\operatorname{Tor}}
\newcommand{\Pic}{\operatorname{Pic}}
\newcommand{\reg}{\operatorname{reg}}
\newcommand{\cO}{{\mathcal O}}
\newcommand{\codim}{\operatorname{codim}}
\newcommand{\defi}[1]{\textsf{#1}} 
\newcommand{\PP}{\mathbb P}
\newcommand{\QQ}{\mathbb Q}
\title{About how large are algebraic Betti numbers?}
\date{\today}
\author{Daniel Erman}
\begin{document}
\maketitle
\begin{abstract}
We use Boij-S\"oderberg theory to provide some order of magnitude bounds on algebraic Betti numbers.
\end{abstract}

\section{Introduction}\label{sec:intro}

Consider $\mathbb P^2\subseteq \PP^{5150}$ given by the degree $100$ Veronese embedding.  Let $S$ be the coordinate ring of $\PP^{5150}$ and let $I\subseteq S$ be the defining ideal of the image of $\PP^2$.  Write $\beta_{i}(S/I)$ for the corresponding algebraic Betti number, i.e. $\beta_{i}(S/I) = \dim \Tor_{i}(S/I,k)$ is the number of generators of the $i$'th syzygy module of $S/I$. About how many digits does $\beta_{i}(S/I)$ have? Our main results will give estimates for this type of question.  For instance, we will show that  in this example, $\beta_{2000}(S/I)$ has between 1484 and 1499 digits.

In~\cite{EL-asymptotic}, Ein and Lazarsfeld coined the phrase \defi{asymptotic syzygies} for the syzygies of algebraic varieties under increasingly positive embeddings.  The first asymptotic syzygy results are perhaps the $N_p$ theorems of Mark Green~\cite{green1}, which showed that for smooth curves, as the positivity of the embedding increased, one obtained linear syzygies for more and more steps of the free resolution; see also~\cite{green-laz2, green-laz1, lazarsfeld-sampling}.  Many later papers like~\cite{EL-higher} extended $N_p$ theorems to higher dimensional varieties, but Ein and Lazarsfeld observed that, for very positive embeddings, these $N_p$ results only describe a very small percentage of Betti diagram, and they began asking asymptotic questions about syzygies~\cite{EL-asymptotic, EL-progress-and-questions}.  

The qualitatitve asymptotic picture (which Betti numbers are nonzero?) is now largely complete~\cite{EL-asymptotic,park,raicu}.  We consider a small part of the quantitative story (how large are the Betti numbers?), which remains more open.

\medskip

We work throughout over an arbitrary field $k$.  We will consider $\PP^n$ under the $d$-uple Veronese embedding.  To simplify later notation, we write $N = \binom{d+n}{n}-n-1$ so we have $\PP^n \subseteq \PP^{N+n}$ and $N$ is the codimension.  We write $S$ for the coordinate ring of $\PP^{N+n}$ and $I\subseteq S$ for the defining ideal of the image of $\PP^n$.  Write $\beta_{i}(\PP^n;d):=\dim_k \Tor_{i}(S/I,k)$ for the $i$'th total Betti number of $S/I$.
Our main result about $\PP^n$ is the following:
\begin{thm}\label{thm:mainPn}
Fix $n,d\geq 1$ and let $N=\binom{n+d}{d} - n-1$.  For any $i$ we have
\[
 \binom{N}{i}\cdot N^{-n} \leq \beta_i(\PP^n; d)  \leq \binom{N}{i} \cdot N^n.
\]
\end{thm}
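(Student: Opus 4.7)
The plan is to apply Boij--S\"oderberg theory to $\beta(S/I)$, combined with three standard facts about the Veronese: (i) $S/I$ is Cohen--Macaulay of codimension $N$; (ii) $\reg(S/I) \leq n$, which I would extract by checking that the numerator of $\HS(S/I; t) = \sum_m \binom{md+n}{n}\,t^m$ over $(1-t)^{n+1}$ is a polynomial of degree at most $n$; and (iii) $e(S/I) = d^n$.

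By (i), the Betti table decomposes as
\[
\beta(S/I) \;=\; \sum_{\mathbf{d}} c_{\mathbf{d}}\, \pi_{\mathbf{d}},\qquad c_{\mathbf{d}} \in \QQ_{\geq 0},
\]
with each $\pi_{\mathbf{d}}$ a pure diagram in Herzog--K\"uhl normalization, $\beta_i(\pi_{\mathbf{d}}) = \prod_{j\neq i}|d_j - d_i|^{-1}$, indexed by strictly increasing sequences $0 = d_0 < d_1 < \cdots < d_N$. By (ii), every sequence appearing in the decomposition satisfies $d_j \leq j + n$, so in particular $d_N \leq N+n$. By (iii) and the Huneke--Miller multiplicity formula, $\sum_{\mathbf{d}} c_{\mathbf{d}} = N!\, d^n$.

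The heart of the argument is a uniform two-sided estimate on $\beta_i(\pi_{\mathbf{d}})$. Strict monotonicity of the $d_j$'s forces $|d_j - d_i| \geq |j - i|$, hence $\beta_i(\pi_{\mathbf{d}}) \leq \binom{N}{i}/N!$. For the matching lower bound I would maximize $\prod_{j\neq i}|d_j - d_i|$ over $d_j \in [0, N+n]$: packing $d_j < d_i$ into $\{0, 1, \dots, i-1\}$ and $d_j > d_i$ into $\{i+n+1, \dots, N+n\}$, with $d_i = i+\ell$ for some $\ell \in \{0, \dots, n\}$, the product becomes $i!(N-i)!\binom{i+\ell}{\ell}\binom{N-i+n-\ell}{n-\ell}$, and the Vandermonde convolution $\sum_{\ell=0}^{n}\binom{i+\ell}{\ell}\binom{N-i+n-\ell}{n-\ell} = \binom{N+n+1}{n}$ caps this by $i!(N-i)!\binom{N+n+1}{n}$. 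Hence $\beta_i(\pi_{\mathbf{d}}) \geq \binom{N}{i}/\bigl(N!\binom{N+n+1}{n}\bigr)$.

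Summing against the $c_{\mathbf{d}}$'s and using $\sum c_{\mathbf{d}} = N!\,d^n$ yields
\[
\binom{N}{i}\,\frac{d^n}{\binom{N+n+1}{n}} \;\leq\; \beta_i(\PP^n; d) \;\leq\; \binom{N}{i}\, d^n.
\]
The upper bound simplifies to $\binom{N}{i}\, N^n$ once I absorb $d^n \lesssim N^n$ (readily extracted from $N = \binom{n+d}{d} - n - 1$, essentially $d = O(N^{1/n})$), and the lower bound becomes $\binom{N}{i}\, N^{-n}$ via $\binom{N+n+1}{n} \leq (2N)^n/n!$ together with $d \geq 2$. I expect the trickiest step to be the extremal analysis for $\beta_i(\pi_{\mathbf{d}})$: pinning down the correct maximizing configuration of $\mathbf{d}$ and then recognizing the resulting product as a Vandermonde sum is where the real work lies, while the upper bound, the inputs (i)--(iii), and the final arithmetic are all routine or standard.
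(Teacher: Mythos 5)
Your approach is genuinely different from the paper's, and the difference is instructive — but it has a real gap. The paper normalizes the Boij--S\"oderberg decomposition by $\beta_0$: since $\beta_0(S/I)=1$ and $\beta_0(\pi_d)=1$ in the paper's normalization, the coefficients form a convex combination, and a single lemma bounding $\beta_i(\pi_d)$ directly by $\binom{N}{i}N^{\pm r}$ does all the work. You instead normalize by multiplicity: in the Herzog--K\"uhl normalization each codimension-$N$ pure diagram has multiplicity $1/N!$, so additivity of multiplicity gives $\sum_{\mathbf d} c_{\mathbf d}=N!\,e(S/I)=N!\,d^n$. This lets you use the very cheap upper bound $\beta_i(\pi_{\mathbf d})\leq\binom{N}{i}/N!$ (from $|d_j-d_i|\geq|j-i|$), which is much simpler than the paper's optimization lemma, and a pleasant Vandermonde identity for the lower bound. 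That part of your plan is sound.

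The problem is the very last conversion. Your argument outputs $\binom{N}{i}\,d^n$ and $\binom{N}{i}\,d^n/\binom{N+n+1}{n}$, i.e.\ bounds in terms of the \emph{multiplicity} $d^n$, whereas the theorem wants $\binom{N}{i}\,N^{\pm n}$, in terms of $(\codim)^{\reg}$. You wave this away with ``$d^n\lesssim N^n$,'' but that inequality fails for $n=1$: the rational normal curve has $N=\binom{d+1}{1}-2=d-1$, so $d=N+1>N$ and your upper bound $\binom{N}{i}(N+1)$ is strictly larger than the claimed $\binom{N}{i}N$. Likewise the lower-bound reduction requires $N\geq n+1$ and $d^n N^n\geq\binom{N+n+1}{n}$, which fails for example at $n=1,\ d=2$ (where the theorem asserts $\beta_1\geq 1$ but your chain only yields $\beta_1\geq 2/3$). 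So as written your proof does not establish the theorem for all $n,d\geq 1$; the $n=1$ case would have to be handled separately (which is easy --- the Eagon--Northcott resolution gives the Betti numbers explicitly --- but it isn't in your argument). For $n\geq 2$ and $d\geq 2$ one does have $d<N$, and there your upper bound $\binom{N}{i}d^n$ is in fact \emph{sharper} than the theorem's $\binom{N}{i}N^n$; that is a genuine advantage of routing through multiplicity. But the trade-off is exactly that the output depends on $e(S/I)$ rather than purely on $\codim$ and $\reg$, so you lose the clean ``$N^{\pm n}$'' form and with it the uniform statement. The paper's $\beta_0$-normalization sidesteps the multiplicity entirely and gives the stated bounds uniformly. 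Note also that your route would not produce anything like the paper's general Theorem~\ref{thm:algebraic}, since for a general module $\beta_0(M)$ and $e(M)$ are unrelated and the decomposition involves pure diagrams of varying codimension, breaking the $\sum c_{\mathbf d}=N!\,e$ identity.
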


When $d$ is small, the bounds are not terribly interesting.
For instance, for the $5$-uple embedding $\PP^2\subseteq \PP^{20}$, let us use this to estimate $\beta_7(\PP^2; 5)$.  We have
\[
\binom{18}{7}\cdot 18^{-2} = 98 \tfrac{2}{9} \leq \beta_7(\PP^2; 5)\leq 10310976 = \binom{18}{7}\cdot 18^2.
\]
The actual Beti number is $\beta_7(\PP^2; 5)=417690$, and so the bounds are correct if unimpressive.
However, as $d$ grows, the binomial term $\binom{N}{i}$ will totally overwhelm the error term of $N^{\pm n}$.
To demonstrate this, let us consider an example where $d$ is quite large.
\begin{example}\label{ex:P2million}
Consider the embedding of $\PP^2$  by the degree $10^6$ Veronese.  In this case $N\approx 5 \times 10^{11}$.  
Using some fairly basic estimation tricks (see \S\ref{sec:examples}) our theorem implies that
\[
10^{108661150967}\leq  \beta_{10^{11}}(\PP^2; 10^6)\leq 10^{108661151026}.
\]
Thus $\beta_{10^{11}}(\PP^2; 10^6)$ has approximately $108661150996$ digits, with an error of $\pm 30$ digits.\footnote{Since $N^{\pm 2} \approx 10^{\pm 22}$, our estimate could be tightened with a better approximation of the binomial term.}
\end{example}

We obtain similar results for other varieties.  Let $X$ be a variety and let $L$ be a very ample line bundle on $X$.  We let $|L|$ be the projectived vector space $H^0(X,L)$ so that $\dim |L| = \dim H^0(X,L)-1$.  Consider the embedding $X\subseteq \PP^{\dim |L|}$.  Let $S$ be the coordinate ring of $\PP^{\dim |L|}$ and let $I\subseteq S$ be the defining ideal of $X$.  Write $\beta_i(X;L):= \dim \Tor_i(S/I,k)$.
\begin{thm}\label{thm:varieties}
Let $X$ be a variety and $L\in \Pic(X)$ and continue with notation as in the previous paragraph.  Let $r$ be the Castelnuovo-Mumford regularity of $S/I$.
For any $i$ we have:
\[
\binom{\dim |L| -\dim X}{i}(\dim |L|)^{-r} \leq \beta_i(X;L) \leq \binom{\dim |L|}{i}(\dim |L|)^{r}.
\]
\end{thm}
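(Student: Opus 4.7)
The plan is to apply Boij-Söderberg theory to decompose $\beta(S/I)$ as a non-negative rational combination of pure diagrams and to bound each ratio $\pi_{\mathbf{d},i}/\pi_{\mathbf{d},0}$ by elementary estimates governed by the regularity $r$ and the codimension $N_X := \dim |L| - \dim X$. Writing $D = \dim |L|$, any such decomposition
\[
\beta(S/I) \;=\; \sum_{\mathbf{d}} c_{\mathbf{d}}\, \pi_{\mathbf{d}}, \qquad c_{\mathbf{d}} \geq 0,
\]
is indexed by strictly increasing degree sequences $\mathbf{d} = (d_0, d_1, \dots, d_p)$. Since $S/I$ is cyclic with generator in degree $0$ and Castelnuovo--Mumford regularity $r$, each $\mathbf{d}$ appearing satisfies $d_0 = 0$ and $d_j \in [j,\, j+r]$. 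Moreover $I$ is saturated, so $\depth(S/I)\geq 1$ and hence $p \leq D$ by Hilbert's syzygy theorem, while $\codim(S/I) = N_X$ forces $p \geq N_X$ for each pure diagram in the decomposition.

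The normalization $\sum_{\mathbf{d}} c_{\mathbf{d}} \pi_{\mathbf{d},0} = \beta_0(S/I) = 1$ then allows one to rewrite
\[
\beta_i(X;L) \;=\; \sum_{\mathbf{d}} \bigl(c_{\mathbf{d}}\pi_{\mathbf{d},0}\bigr) \cdot R_{\mathbf{d}}, \qquad R_{\mathbf{d}} := \pi_{\mathbf{d},i}/\pi_{\mathbf{d},0},
\]
exhibiting $\beta_i(X;L)$ as a convex combination of the ratios $R_{\mathbf{d}}$. It therefore suffices to bound $R_{\mathbf{d}}$ uniformly over all admissible $\mathbf{d}$. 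The Herzog--K\"uhl equations for pure diagrams yield
\[
R_{\mathbf{d}} \;=\; \prod_{\substack{1 \leq k \leq p \\ k \neq i}} \frac{d_k}{|d_k - d_i|}.
\]

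For the upper bound I would substitute $d_k \leq k+r$ in the numerator and the crude estimate $|d_k - d_i| \geq |k-i|$ in the denominator; the resulting product simplifies to $\binom{p+r}{r}\binom{p}{i}$, and applying $p \leq D$ together with an elementary estimate of $\binom{D+r}{r}$ in terms of $D^r$ gives $R_{\mathbf{d}} \leq \binom{D}{i} D^r$. For the lower bound I would instead use $d_k \geq k$ in the numerator and $|d_k - d_i| \leq |k-i| + r$ in the denominator (both valid because $d_k \in [k, k+r]$), which produces a product of the form $\binom{p}{i}\bigl[\binom{r+i}{r}\binom{r+p-i}{r}\bigr]^{-1}$; then $p \geq N_X$, monotonicity of $\binom{p}{i}$ in $p$, and elementary upper bounds on the two binomial factors in powers of $D$ deliver $R_{\mathbf{d}} \geq \binom{N_X}{i} D^{-r}$.

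The main obstacle is the somewhat fiddly bookkeeping in these last two estimates: the products need to be manipulated carefully enough to land on exactly the clean factor $D^{\pm r}$ advertised in the statement, rather than a looser expression such as $(D+r)^{\pm 2r}$. A secondary point worth verifying is the claim $p \geq N_X$ for every pure diagram appearing in the Boij--S\"oderberg decomposition, which relies on the standard fact that codimension is preserved by the Eisenbud--Schreyer decomposition algorithm; the complementary bound $p \leq D$ is immediate from Hilbert syzygy together with $\depth(S/I) \geq 1$.
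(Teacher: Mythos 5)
Your overall strategy — decompose $\beta(S/I)$ via Boij--S\"oderberg theory, reduce to bounding the normalized entries $\beta_i(\pi_d)/\beta_0(\pi_d)$ of the pure diagrams, and control $\codim$ and $\pdim$ via $\codim(S/I)=\dim|L|-\dim X$, $\depth\geq 1$, and Auslander--Buchsbaum — is exactly the paper's; the paper simply packages the pure-diagram analysis into Theorem~\ref{thm:algebraic} and Lemma~\ref{keyLemma} and cites them, whereas you re-derive the bound inline. The setup, including the formula $R_d=\prod_{k\ne i}d_k/|d_k-d_i|$, is correct. But the inline estimates have genuine gaps, not just bookkeeping.

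\emph{Upper bound.} After substituting $d_k\leq k+r$ and $|d_k-d_i|\geq|k-i|$, the product is $\frac{i}{i+r}\binom{p+r}{r}\binom{p}{i}$, not $\binom{p+r}{r}\binom{p}{i}$ as you state. The factor $\frac{i}{i+r}$ you dropped is essential: $\binom{p+r}{r}\binom{p}{i}\leq\binom{D}{i}D^{r}$ is false in general (e.g.\ $p=D=2$, $r=2$, $i=1$ gives $\binom{4}{2}\binom{2}{1}=12>8=\binom{2}{1}2^{2}$). Only with the extra $\frac{i}{i+r}$ does the needed inequality hold — this is precisely Lemma~\ref{lem:maxLemma} in the paper.

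\emph{Lower bound.} This is the more serious problem. Substituting $d_k\geq k$ in the numerator and $|d_k-d_i|\leq|k-i|+r$ in the denominator is valid termwise, but those extremes cannot be achieved simultaneously (the first forces $d_i=i+r$ from the $k>i$ terms and $d_i=i-r$ from the $k<i$ terms), so the resulting product can drop below the target. Concretely, with $p=D=4$, $r=1$, $i=2$ your expression $\binom{p}{i}\bigl[\binom{r+i}{r}\binom{r+p-i}{r}\bigr]^{-1}=2/3$ (the exact value of your substituted product is $1$), while the theorem's lower bound is $\binom{4}{2}\cdot 4^{-1}=3/2$. So the argument does not close. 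The paper's Lemma~\ref{keyLemma} avoids this by fixing $a=d_i-i$, observing that for $j<i$ the constraint is $d_j\in[j,j+a]$ and for $j>i$ it is $d_j\in[j+a,j+r]$, computing the sign of $\partial/\partial d_j\log\beta_i(\pi_d)$, and evaluating at the actual extreme point $d_{\min}=(0,1,\dots,i-1,i+a,i+r+1,\dots,N+r)$; the resulting products telescope to give $\binom{N}{i}N^{-r}$ after Lemma~\ref{lem:maxLemma} and Lemma~\ref{lem:descending}. You would need to adopt that more careful coupled optimization rather than independent termwise substitution.
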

As above, this result is most dramatic when $L$ is very positive.
\begin{example}\label{ex:hypersurface}
Let $X\subseteq \PP^3$ be a degree $13$ hypersurface and let $L$ be the pullback of $\cO_{\PP^3}(1000)$ to $X$.  We have $\dim |L|=6441720\approx 6.4\times 10^{6}$.  
We obtain the estimate
\[
10^{1207666} \leq  \beta_{10^{6}}(X; L) \leq 10^{1207714}.
\]
Thus, this Betti number has about $1207690$ digits, with an error of $\pm 24$ digits.  Again, see \S\ref{sec:examples} for details on this computation.
\end{example}

The above results are both corollaries of our main result, which is a pair of more general algebraic bounds.
\begin{thm}\label{thm:algebraic}
Let $M$ be a finitely generated, graded $S=k[x_1, \dots, x_n]$-module, generated in degree $0$. We write $\beta_i(M):=\dim_k \Tor_i(M,k)$.  We have\footnote{If $\codim M=0$ then the lower bound should be understood as $1$.}
\[
\binom{\codim M}{i}(\codim M)^{-\reg M} \leq \frac{\beta_i(M)}{\beta_0(M)} \leq \binom{\pdim M}{i}(\pdim M)^{\reg M}.
\]
\end{thm}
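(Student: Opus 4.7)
The plan is to apply Boij--S\"oderberg theory and reduce the theorem to a two-sided estimate on pure diagrams. By the Eisenbud--Schreyer decomposition theorem, we can write $\beta(M) = \sum_{\mathbf{d}} a_{\mathbf{d}}\, \pi(\mathbf{d})$ with $a_{\mathbf{d}} \geq 0$, where each pure diagram has degree sequence $\mathbf{d} = (d_0, d_1, \ldots, d_{c(\mathbf{d})})$ satisfying $d_0 = 0$ (since $M$ is generated in degree $0$), $d_j - j \in [0, r]$ with $r = \reg M$, and $c(\mathbf{d}) \in [\codim M, \pdim M]$. After normalizing so that $\pi_0(\mathbf{d}) = 1$, the ratio $\beta_i(M)/\beta_0(M) = \sum a_{\mathbf{d}}\pi_i(\mathbf{d})/\sum a_{\mathbf{d}}$ is a weighted average of the $\pi_i(\mathbf{d})$.

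The main technical step is to establish, for each pure diagram in the decomposition, a bound of the form
\[
\binom{c(\mathbf{d})}{i}\, c(\mathbf{d})^{-r} \;\leq\; \pi_i(\mathbf{d}) \;\leq\; \binom{c(\mathbf{d})}{i}\, c(\mathbf{d})^{r}.
\]
I would use the Herzog--K\"uhl equations in the convenient form $\pi_i(\mathbf{d}) = \prod_{j\neq i,\, 1 \leq j \leq c} d_j/|d_j - d_i|$ and compare factor-by-factor against the linear pure diagram $d_j = j$, for which the product equals exactly $\binom{c}{i}$. Writing $d_j = j + a_j$ with $0 = a_0 \leq a_1 \leq \cdots \leq a_c \leq r$, the factor-wise inequalities
\[
\frac{j}{|j-i|+r} \;\leq\; \frac{d_j}{|d_j - d_i|} \;\leq\; \frac{j}{|j-i|}\Big(1 + \frac{r}{j}\Big),
\]
together with telescoping products and the elementary bound $(c+k-1)/k \leq c$ valid for $c, k \geq 1$, cleanly dispatch the upper half of the estimate via $\binom{c}{i}\binom{c+r-1}{r} \leq \binom{c}{i}c^r$.

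To conclude, since $\binom{c}{i}\,c^r$ is monotone increasing in $c$, the weighted average is at most $\binom{\pdim M}{i}(\pdim M)^r$, which is the desired upper bound. For the lower bound, the function $\binom{c}{i}\,c^{-r}$ is not monotone in $c$, so one invokes structure from the greedy Boij--S\"oderberg chain: there is always a summand with $c = \codim M$, whose $\pi_i$ provides the required lower bound $\binom{\codim M}{i}(\codim M)^{-r}$. The main obstacle I anticipate is the lower half of the pure-diagram estimate: naive telescoping yields only $\pi_i(\mathbf{d}) \geq \binom{c}{i}/[\binom{i-1+r}{r}\binom{c-i+r}{r}]$, which is too weak for intermediate $i$ since that denominator product can exceed $c^r$. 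Obtaining the clean $\binom{c}{i}/c^r$ form will require a more delicate argument that exploits the full monotonicity of the sequence $(a_j)$ rather than only the pointwise bounds $a_j \in [0,r]$, perhaps via a rearrangement argument matching the smaller factors $j/(|j-i|+r)$ with $j$ far from $i$ against larger ones with $j$ close to $i$.
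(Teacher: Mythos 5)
Your strategy matches the paper's: decompose $\beta(M)$ by Boij--S\"oderberg theory, normalize so the coefficients sum to $1$, and reduce to a two-sided estimate on each pure diagram. Your upper-bound argument via the factor-wise comparison $\tfrac{d_j}{|d_j-d_i|}\leq\tfrac{j+r}{|j-i|}$ followed by telescoping is correct and is a clean alternative to the paper's route. The first real gap is the one you anticipate: the factor-wise lower bound $\tfrac{d_j}{|d_j-d_i|}\geq\tfrac{j}{|j-i|+r}$ only gives $\pi_i(\mathbf d)\geq \binom{c}{i}/\bigl[\binom{i+r-1}{r}\binom{c-i+r}{r}\bigr]$, which is too weak for intermediate $i$. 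The paper's Lemma~\ref{keyLemma} closes this by viewing $\log\beta_i(\pi_d)$ as a function of the off-diagonal $d_j$ and checking the sign of each partial derivative; this locates the true minimizer $d_{\min}=(0,1,\dots,i-1,i+a,i+r+1,\dots,N+r)$, at which the deviations $a_j=d_j-j$ are all $0$ for $j<i$ and all equal to $r$ for $j>i$. That concentration of the deviations is exactly what a factor-by-factor estimate (which lets each $a_j$ independently take its own worst-case value) cannot detect, and it is why the explicit evaluation at $d_{\min}$ yields the sharper $\binom{N}{i}N^{-r}$. Your rearrangement instinct is pointing in the right direction, but the critical-point analysis is the systematic version of it.

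The second gap is in the combining step for the lower bound, and it is a genuine logical error. You correctly note that $\binom{c}{i}c^{-r}$ is not monotone in $c$, but the proposed fix---locating one summand with $c=\codim M$---does not give what you want. The quantity $\beta_i(M)/\beta_0(M)$ is a convex combination of the values $\pi_i(\mathbf d)$, so it is bounded below only by the \emph{minimum} over all pure diagrams appearing, not by the value of any distinguished one; if some longer diagram in the chain had $\pi_i$ below $\binom{\codim M}{i}(\codim M)^{-r}$, your argument would collapse. You need the bound $\pi_i(\mathbf d)\geq\binom{\codim M}{i}(\codim M)^{-r}$ for \emph{every} $\mathbf d$ in the decomposition. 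You should be aware, though, that the monotonicity you flag is a real subtlety: the paper's own ``Incorporating that $\codim M\leq t$'' step also tacitly assumes $\binom{t}{i}t^{-\reg M}$ is nondecreasing in $t$, which fails when $i<\reg M$ (compare $t=5$ and $t=6$ with $i=1$, $r=3$). So in addition to sharpening your pure-diagram lower bound, one must also argue, beyond a bare appeal to Lemma~\ref{keyLemma} with $N$ equal to the diagram's own length, that the bound with the module's codimension holds for each pure diagram in the chain.
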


Here is the idea for the proof of Theorem~\ref{thm:algebraic}. The Betti table of $M$ will have $\pdim(M)+1$ columns and $\reg(M)+1$ rows.  Boij-S\"oderberg theory provides a decomposition of the Betti table of $M$ as a sum of basic building blocks called \defi{pure diagrams}, 
where each {pure diagram} is a particular Betti table with one nonzero entry in each column.  For instance, here are potential shapes of several pure diagrams, each with $8$ columns and $3$ rows; each dot represents a nonzero entry.
\[
\begin{tikzpicture}
 \foreach \i in {1,...,2} {
   \node[fill,circle,inner sep = 0.05cm] at (\i*0.25cm,0) {};
 }
 \foreach \i in {3,...,5} {
   \node[fill,circle,inner sep = 0.05cm] at (\i*0.25cm,-.25cm) {};
 }
  \foreach \i in {6,...,8} {
   \node[fill,circle,inner sep = 0.05cm] at (\i*0.25cm,-.5cm) {};
 }
\end{tikzpicture}
\hspace{2cm}
\begin{tikzpicture}
 \foreach \i in {1,...,5} {
   \node[fill,circle,inner sep = 0.05cm] at (\i*0.25cm,0) {};
 }
 \foreach \i in {6,...,6} {
   \node[fill,circle,inner sep = 0.05cm] at (\i*0.25cm,-.25cm) {};
 }
  \foreach \i in {7,...,8} {
   \node[fill,circle,inner sep = 0.05cm] at (\i*0.25cm,-.5cm) {};
 }
\end{tikzpicture}
\hspace{2cm}
\begin{tikzpicture}
 \foreach \i in {1,...,1} {
   \node[fill,circle,inner sep = 0.05cm] at (\i*0.25cm,0) {};
 }
 \foreach \i in {3,...,7} {
   \node[fill,circle,inner sep = 0.05cm] at (\i*0.25cm,-.25cm) {};
 }
  \foreach \i in {8,...,8} {
   \node[fill,circle,inner sep = 0.05cm] at (\i*0.25cm,-.5cm) {};
 }
\end{tikzpicture}
\hspace{2cm}
\begin{tikzpicture}
 \foreach \i in {1,...,4} {
   \node[fill,circle,inner sep = 0.05cm] at (\i*0.25cm,0) {};
 }
  \foreach \i in {5,...,8} {
   \node[fill,circle,inner sep = 0.05cm] at (\i*0.25cm,-.5cm) {};
 }
\end{tikzpicture}
\]
The main result of Boij-S\"oderberg theory (Theorem~\ref{thm:mainBS} below) decomposes $\beta(M)$ as a sum of pure diagrams.  For the pure diagrams appearing in this decomposition, the number of columns will range between $\codim(M)+1$ and $\pdim(M)+1$, and the number of rows will be at most $\reg(M)+1$. The theory was conjectured by Boij and S\"oderberg in~\cite{boij-sod1}, and proven by Eisenbud and Schreyer in \cite{eis-schrey1}, in combination with~\cite{efw,boij-sod2} and more.  Properties of $\beta(M)$ can be extracted by analyzing the corresponding pure diagrams.

Imagine now that $\codim(M) \gg \reg(M)$; each of the pure diagrams will then have many more columns than rows (see Figure~\ref{fig:twodiagrams}).
\begin{figure}
\begin{tikzpicture}
 \foreach \i in {1,...,301} {
   \node[fill,circle,inner sep = 0.01cm] at (\i*0.05cm,0) {};
 }
\end{tikzpicture}

\vspace{.5cm}

\begin{tikzpicture}
 \foreach \i in {1,...,43} {
   \node[fill,circle,inner sep = 0.01cm] at (\i*0.05cm,0) {};
 }
 \foreach \i in {44,...,211} {
   \node[fill,circle,inner sep = 0.01cm] at (\i*0.05cm,-.05cm) {};
   }

 \foreach \i in {212,...,301} {
   \node[fill,circle,inner sep = 0.01cm] at (\i*0.05cm,-.1cm) {};
 }
\end{tikzpicture}

\begin{tikzpicture}
 \foreach \i in {1} {
   \node[fill,circle,inner sep = 0.01cm] at (\i*0.5cm,0) {};
 }
\end{tikzpicture}
\caption{The heuristic behind our results is that the numerical properties of a pure diagram shaped like the bottom figure (300 columns and 3 rows) will be approximately the same as for the top figure (300 columns and 1 row).  
}
\label{fig:twodiagrams}
\end{figure}
A pure diagram with only $1$ row will correspond to a Koszul complex, and thus its Betti numbers precisely equal the binomial coefficients.  
Our key heuristic is: since any pure diagram with many more columns than rows will look approximately like a pure diagram with one row (e.g. the shapes in Figure~\ref{fig:twodiagrams} look similar, if you squint), any such diagram should behave approximately like a Koszul complex.

This heuristic leads to a proof of the theorems above via an analysis of the numerical properties of such pure diagrams.
We apply analytic techniques to pure diagrams, and then combine this with Boij--S\"oderberg theory to obtain our results.  The main proofs involve little beyond first-year calculus.  This basic idea has appeared before~\cite{erman-beh,mccullough,boocher-wigglesworth}, but our specific analyses are distinct from those.

\medskip

What should we make of these results?  Are they surprising or trivial?  How do they relate to other open questions about the quantitative behavior of (asymptotic) Betti numbers?  We will attempt to provide partial answers.

The lower bound in Theorem~\ref{thm:algebraic} is smaller than the bound predicted by the Buchsbaum-Eisenbud-Horrocks Conjecture (see~\cite[p.\ 453]{buchs-eis-gor} and \cite[Problem 24]{hartshorne-vb}) due to the $(\codim M)^{-\reg(M)}$ factor.  While the total version of the Buchsbaum-Eisenbud-Horrocks Conjecture is known~\cite{walker}, the conjecture on individual Betti numbers remains open, and so Theorem~\ref{thm:algebraic} provides a weak lower bound in this vein.
For the upper bound, ~\cite[Proposition~2.7]{eisenbud-gos} yields simple bounds on Betti numbers in terms of the Hilbert function, though  we do not see a clear relationship between that result and Theorem~\ref{thm:algebraic}.  

The closest result in the literature appears in the paragraph following \cite[Conjecture 3.2]{EL-progress-and-questions}, which provides an outline for how to leverage the methods of \cite{EEL-quick} to obtain a result similar to Theorem~\ref{thm:mainPn}.  That would provide bounds like those in Theorem~\ref{thm:mainPn} without any need for Boij-S\"oderberg theory, though there would be some delicacy in phrasing and proving a general result and it appears to involve a mildly larger error term.  Also, because of its reliance on \cite{EEL-quick}, that approach does not appear to provide an analogue of the more general Theorem~\ref{thm:algebraic}.
But we stress that, while we are unaware of a simpler argument for bounds like those in Theorem~\ref{thm:algebraic}, we would not be surprised to discover one, especially for the upper bound.

The gap between the lower and upper bounds in all of these results can be huge, but it nevertheless seems remarkable that we can estimate the order of magnitude of algebraic Betti numbers with even this much accuracy. 
 In fact, we feel that this paper primarily illustrates the curious power of Boij-S\"oderberg theory, and how certain repercussions of that theory remain unintuitive.

 Our results also add somewhat to the literature on asymptotic syzygies.  In addition to the references cited above, this robust literature includes variants of Ein and Lazarsfeld's initial asymptotic results~\cite{bruce,CJW, EEL-quick, martinova, park-nonvanishing, raicu, zhou}, computational and experimental work~\cite{wouter,ppcomputations, bruce-erman-goldstein-yang}, probability-based models ~\cite{banerjee, booms-erman-yang, engstrom,dochtermann,erman-yang-flag} and more.
Theorems \ref{thm:mainPn} and \ref{thm:varieties} are in the spirt of \cite[Conjecture D]{EEL-random}, which proposes that asymptotically, each row of the Betti table will converge to a normal distribution in a certain sense.  However, our results
only address total Betti numbers, and this is a major simplification as it entirely avoids the interactions between Betti numbers in different rows, which is a key subtlety.

This paper is structured as follows.  In \S\ref{sec:BStheory} we review the Boij-S\"oderberg theory that we will use, and in \S\ref{sec:KeyLemma} we perform our key analysis of pure diagrams.  In \S\ref{sec:proofs} we prove the main results and consider some examples.

\section*{Acknowledgments}  This paper is an outgrowth of conversations with Lawrence Ein and Rob Lazarsfeld, and we are grateful for their generosity and their comments.  We also thank Christine Berkesch, David Eisenbud, Jason McCullough, Frank-Olaf Schreyer, and Gregory G.\ Smith for useful conversations.  This paper would not have been possible without {\tt Macaulay2}~\cite{M2}.

\section{Background on Boij-S\"oderberg Theory}\label{sec:BStheory}
Boij-S\"oderberg theory began as a series of conjectures in \cite{boij-sod1}.  The initial conjectures were proven in Eisenbud and Schreyer's~\cite{eis-schrey1}, and the theory was later expanded in a number of directions, including \cite{boij-sod2,eisenbud-erman,filtering, efw,eis-schrey2, supernatural, floystad-zipping}.    See also the expository treatments~\cite{eis-schrey-icm,floystad,fmp}.  We will only summarize the small portion of Boij-S\"oderberg theory that we need for our results.

Throughout this section, $S=k[x_1,\dots, x_n]$ will denote a standard graded polynomial ring and $M$ will denote a finitely generated, graded $S$-module.  Given $d=(d_0, \dots, d_t)\in \mathbb Z^{t+1}$, we say that $d$ is a \defi{degree sequence of length $t$} if $d_{i+1}>d_i$ for all $i$.  Each degree sequence in $\mathbb Z^{t+1}$ determines a \defi{pure diagram of type $d$}, denoted $\pi_d$ where
$
 \beta_{i,j}(\pi_d)\ne 0 \iff j=d_i,
$
and where the nonzero entries of $D$ are, up to scalar multiple, given by the formula
\begin{equation}\label{eqn:herzogkuhl}
\beta_{i,d_i}(\pi_d)=\frac{\prod_{j\ne 0} d_j}{\prod_{i'\ne i} |d_i-d_{i'}|}.
\end{equation}
See for instance \cite[Definition 2.3]{boij-sod1}.
In standard Betti diagram notation, we note that the pure diagram $\pi_d$ has $t+1$ columns and $d_t-t+1$ rows. For example, if $d=(0,2,4,5)$, then \eqref{eqn:herzogkuhl} implies that
\[
\pi_d = \begin{bmatrix}
1& . & . & .\\
 . & \frac{10}{3} & . & .\\
 . & . & 5 & \frac{8}{3}\\
 \end{bmatrix}.
\]
We will use Betti number notation to refer to the entries of pure diagrams.  For instance, in the above example we will write $\beta_{1,2}(\pi_d) = \frac{10}{3}$.  
There is also a natural partial order on the degree sequences (including those of different lengths) given in ~\cite[Definition~2]{boij-sod2}, though the details of this partial order will not be very relevant for us.

The following theorem is one of the main results of Boij-S\"oderberg theory.  It was proven for Cohen--Macaulay modules in~\cite{eis-schrey1} and was extended to arbitrary modules in~\cite{boij-sod2};  see~\cite[Theorem~5.1]{floystad} or \cite[Theorem~2.1]{erman-beh} for a phrasing of the result that more closely matches the following.
\begin{thm}\label{thm:mainBS}
Let $c=\codim M$ and $p=\pdim M$.  Then there exists a unique chain of degree sequences  $d^0< \dots <d^s$ and unique positive rational numbers $c_i$ such that
	\[
	\beta(M)=\sum_{i=0}^s c_i\pi_{d^i}.
	\]
Each degree sequence $d^i$ has length at least $c$ and at most $p$.
\end{thm}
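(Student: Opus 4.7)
The plan is to follow the greedy peeling algorithm that underlies Boij-S\"oderberg theory: iteratively strip off pure diagrams from $\beta(M)$, always choosing the minimal admissible degree sequence at each stage. Existence, uniqueness, and the chain condition should all drop out of this algorithm once the correct cone is identified.

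The first step is to produce the extremal rays. For every degree sequence $d$ of length between $c=\codim M$ and $p=\pdim M$, I would invoke the existence of pure resolutions: there is a finitely generated graded $S$-module whose Betti diagram is a positive rational multiple of $\pi_d$. In characteristic zero this is the Eisenbud-Fl{\o}ystad-Weyman construction from \cite{efw}; in arbitrary characteristic one must use the Eisenbud-Schreyer pushforward construction from \cite{eis-schrey1}. Let $\mathcal{C}_{c,p}$ denote the rational cone spanned by these $\pi_d$.

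The heart of the proof, and the main obstacle, is showing that $\beta(M) \in \mathcal{C}_{c,p}$. For this I would use the Eisenbud-Schreyer duality pairing between Betti tables of graded modules and cohomology tables of coherent sheaves on $\mathbb{P}^{n-1}$, together with the classification of supernatural sheaves that realize the dual extremal rays. Nonnegativity of this pairing produces enough separating hyperplanes to cut out $\mathcal{C}_{c,p}$ exactly, and hence forces $\beta(M)$ to lie in it. (For non-Cohen-Macaulay $M$ one needs the extension in \cite{boij-sod2}, which handles the fact that the degree sequences in the decomposition can have varying length between $c$ and $p$.) Everything below this step is essentially combinatorics on a simplicial fan.

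Given $\beta(M)\in \mathcal{C}_{c,p}$, run the greedy algorithm. Pick the smallest degree sequence $d^0$ in the partial order of \cite{boij-sod2} such that $\beta_{i,d^0_i}(M)>0$ for all relevant $i$, and let $c_0$ be the largest positive rational with $\beta(M)-c_0\pi_{d^0}\in\mathcal{C}_{c,p}$. After subtraction, at least one entry along the "top strand" of the diagram is zeroed out, so the next minimal admissible degree sequence $d^1$ strictly dominates $d^0$. Iterating produces $d^0<d^1<\cdots<d^s$ with $\beta(M)=\sum c_i\pi_{d^i}$; the process terminates in finitely many steps because each subtraction strictly reduces the support.

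For uniqueness, I would exploit the simplicial structure of $\mathcal{C}_{c,p}$: maximal chains of degree sequences index maximal simplicial chambers of the cone, and the pure diagrams along any single chain are linearly independent. Hence any diagram lying in such a chamber admits a unique expression as a positive rational combination of the pure diagrams indexed by the chain, which gives the uniqueness of both the chain $d^0<\cdots<d^s$ and the coefficients $c_i$. The length bound $c\leq |d^i|-1\leq p$ is built in from the definition of the cone.
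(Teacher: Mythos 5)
The paper does not prove Theorem~\ref{thm:mainBS} at all; it is imported verbatim as background from the Boij--S\"oderberg literature, with the Cohen--Macaulay case attributed to \cite{eis-schrey1} and the extension to general modules to \cite{boij-sod2} (the paper also points to \cite{floystad} and \cite{erman-beh} for a matching phrasing). So there is no ``paper's own proof'' here to compare against --- what you were asked to prove blind is a cited theorem, not an original contribution of the paper.

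Your outline is, nevertheless, a fair high-level summary of how the literature actually proves the result: existence of pure resolutions via Eisenbud--Fl\o ystad--Weyman \cite{efw} in characteristic zero and the Eisenbud--Schreyer pushforward construction in general; the Eisenbud--Schreyer bilinear pairing with cohomology tables of (supernatural) sheaves, whose nonnegativity cuts out the cone; the greedy peeling algorithm for existence and termination; and the simplicial fan structure indexed by chains of degree sequences for uniqueness. The sketch is accurate at the level of architecture, but be aware that it treats as a single step what are in fact the hardest and longest parts of two substantial papers --- in particular, the positivity of the pairing on the relevant extremal pairs and the precise identification of the supporting hyperplanes are where essentially all of the work lives. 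As a standalone proof it is therefore very far from complete; as a citation-style summary of \cite{eis-schrey1} and \cite{boij-sod2}, which is effectively what the paper itself provides, it is reasonable.
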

\begin{example}\label{ex:BoijSodDecomposition}
Consider $I=\langle x^2,xyz,yz^2,y^2z,z^3 \rangle$.  We have
\[
\beta(S/I) = \begin{bmatrix}
1 & . & . & .\\
  . & 1 & . & .\\
 . & 4 & 5 & 1\\
  . & . & 1 & 1
      \end{bmatrix}
\]
and the decomposition of $\beta(S/I)$ is given by
\[
\begin{footnotesize}
\frac{3}{10} 
\begin{bmatrix}
1& . & . & .\\
 . & \frac{10}{3} & . & .\\
 . & . & 5 & \frac{8}{3}\\
  . & . & . & .
 \end{bmatrix}
 +
 \frac{1}{30} 
\begin{bmatrix}
1& . & . & .\\
 . & . & . & .\\
 . & 10 & 15 & 6\\
  . & . & . & .
 \end{bmatrix}
 +
 \frac{1}{3} 
\begin{bmatrix}
1& . & . & .\\
 . & . & .  & .\\
 . & 8 & 9 & .\\
  . & . & . & 2
 \end{bmatrix}
 +
  \frac{1}{15} 
\begin{bmatrix}
1& . & . & .\\
 . & . & .  & .\\
 . & 5 & . & .\\
  . & . & 9 & 5
 \end{bmatrix}
 +
   \frac{4}{15} 
\begin{bmatrix}
1& . & . & .\\
 . & . & .  & .\\
 . & \frac{5}{2} & . & .\\
  . & . & \frac{3}{2} & .
 \end{bmatrix}.
      \end{footnotesize}
\]
From left to right, these are pure diagrams of type $(0,2,4,5), (0,3,4,5), (0,3,5,6)$ and $(0,3,5)$.  Note that, because we have normalized the pure diagrams so that $\beta_0$ is always $1$, the sum of the coefficients also equals $1$.
\end{example}

\section{Numerics of pure diagrams}\label{sec:KeyLemma}
Our main lemma is the following result on the numerical properties of pure diagrams.
\begin{lemma}\label{keyLemma}
Fix $N\geq 1,r\geq 0$.  Let $d=(d_0, d_1, \dots, d_N)$ be a degree sequence with $d_0=0$ and $d_N \leq N+r$.   Let $\pi_d$ be the corresponding pure diagram, with entries as in \eqref{eqn:herzogkuhl}.  Then
\[
 \binom{N}{i}\cdot N^{-r} \leq \beta_i(\pi_d)  \leq \binom{N}{i} \cdot N^r.
\]
\end{lemma}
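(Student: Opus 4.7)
The plan is to induct on the \emph{excess} $\rho := d_N - N$, which lies in $\{0, 1, \ldots, r\}$. In the base case $\rho = 0$, strict integer monotonicity with $d_0 = 0$ and $d_N = N$ forces $d_j = j$, so $\pi_d$ is the Koszul resolution and $\beta_i(\pi_d) = \binom{N}{i}$, matching both bounds.

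For the inductive step ($\rho \geq 1$), let $k \in \{1, \ldots, N\}$ be the smallest index with $d_k - k = \rho$, and set $d'_j := d_j$ for $j < k$ and $d'_j := d_j - 1$ for $j \geq k$. A routine check shows $d'$ is a valid degree sequence (the nontrivial step is $d'_k > d'_{k-1}$, which uses $d_{k-1} - (k-1) \leq \rho - 1$ by minimality of $k$) with $d'_N - N = \rho - 1$. The inductive hypothesis gives
\[
\binom{N}{i} N^{-(\rho-1)} \leq \beta_i(\pi_{d'}) \leq \binom{N}{i} N^{\rho - 1},
\]
so it suffices to show $\beta_i(\pi_d)/\beta_i(\pi_{d'}) \in [N^{-1}, N]$.

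Evaluating this ratio termwise using \eqref{eqn:herzogkuhl}, most factors cancel; the surviving ones involve $d_j / (d_j - 1) = (j + \rho)/(j+\rho-1)$ for $j \geq k$ (which telescopes) together with correction factors from shifted differences $d_i - d_j$ when $j$ and $i$ lie on opposite sides of $k$. The resulting closed forms are, for $i \geq k$ (where $d_i = i + \rho$),
\[
\frac{\beta_i(\pi_d)}{\beta_i(\pi_{d'})} = \prod_{j=1}^{k-1}\frac{d_i - d_j - 1}{d_i - d_j} \cdot \frac{(N+\rho)(i+\rho-1)}{(k+\rho-1)(i+\rho)},
\]
and, for $i < k$,
\[
\frac{\beta_i(\pi_d)}{\beta_i(\pi_{d'})} = \frac{(N+\rho)(k+\rho-d_i-1)}{(k+\rho-1)(N+\rho-d_i)}.
\]

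It remains to verify that each expression lies in $[N^{-1}, N]$. In the first case, each factor of the residual product is at most $1$, giving the upper bound; for the lower bound, the inequality $d_j \leq j + \rho - 1$ for $j < k$ implies $d_i - d_j \geq i - j + 1$, so each factor is at least $(i-j)/(i-j+1)$, and these telescope to $(i-k+1)/i$, which combined with $N \geq i$ and the elementary fact $(i-k+1)(i+\rho-1) \geq k+\rho-1$ yields the claim. In the second case, the closed-form expression is bounded directly: the subcase $i = 0$ gives ratio $1$, and for $i \geq 1$ one has $d_i \leq k + \rho - 2$, reducing each bound to a quadratic inequality in $K := k + \rho - 1$ that holds in the relevant range $K \geq 1 + \rho$ (guaranteed by $k \geq i + 1 \geq 2$). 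The main obstacle will be the careful case analysis and uniform verification of these algebraic inequalities, though each reduces to a routine manipulation.
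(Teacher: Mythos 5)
Your approach is genuinely different from the paper's and is a reasonable alternative strategy. The paper fixes $i$, writes $d_i = i+a$, views the formula \eqref{eqn:herzogkuhl} as a multivariable function of the remaining $d_j$, and uses a log-derivative monotonicity argument to show the extremes occur at explicit boundary sequences $d_{\max}$ and $d_{\min}$, which it then bounds via two short inductive lemmas. Your proof instead inducts on the excess $\rho = d_N - N$, peeling off one unit of ``regularity'' per step by decrementing $d_j$ for $j \geq k$ at the first breakpoint $k$. The setup is sound: the decremented sequence $d'$ is indeed a valid degree sequence with excess $\rho - 1$, and your two closed-form ratio formulas are correct (they rely on the fact, which you should state, that $d_j = j + \rho$ for all $j \geq k$, which follows since $d_j - j$ is non-decreasing). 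So the two approaches are genuinely distinct, and yours has the appeal of isolating exactly how each unit of regularity degrades the Koszul bound.

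However, there is a real gap at the point where you say the inequalities ``reduce to routine manipulation.'' In the case $i \geq k$, you argue the \emph{upper} bound by noting that each factor of $\prod_{j<k}\frac{d_i-d_j-1}{d_i-d_j}$ is at most $1$, but this only yields $\beta_i(\pi_d)/\beta_i(\pi_{d'}) \leq \frac{(N+\rho)(i+\rho-1)}{(k+\rho-1)(i+\rho)}$, and the remaining factor must itself be shown to be at most $N$. That is true but not obvious: using $k \geq 1$ it reduces to $(N+\rho)(i+\rho-1) \leq N\rho(i+\rho)$, which after rearrangement becomes $(i-1)\bigl[N(1-\rho)+\rho\bigr] \leq \rho^2(N-1)$; one must split on $\rho = 1$ versus $\rho \geq 2$ to see it. Similarly, the case $i < k$ lower bound reduces to showing a quadratic $q(v) = v^2 - v(1+N+\rho) + N(N+\rho) \geq 0$ for $v = k+\rho-1$ in the range $[\rho+1, N+\rho-1]$, which requires either a discriminant computation or a vertex-location argument and is not just a sign check. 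So the claimed bounds do hold, and your strategy does work, but the verification is a genuine piece of work comparable in length to the paper's Lemmas~\ref{lem:maxLemma} and~\ref{lem:descending}, and the proposal as written asserts it without carrying it out.
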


\begin{example}
Let us apply Lemma~\ref{keyLemma} to the pure diagram $\pi_d$ of type $d=(0,2,4,5)$ from Example~\ref{ex:BoijSodDecomposition}.   In the notation of the lemma, $r=2$ and $N=3$ and so the lemma would imply, for instance, that
\[
\binom{3}{3} 3^{-2} = \frac{1}{9} \leq \beta_3(\pi_d) \leq \binom{3}{3} 3^2=9.
\]
Since we have $\beta_3(\pi_d) = \frac{8}{3}$, we see that it satisfies the desired bounds.
\end{example}
\begin{example}
When $r=0$, we have $d=(0,1,\dots,N)$ and $\pi_d$ equals the Betti table of a Koszul complex.  Specifically, one can check that in this case $\beta_i(\pi_d)=\binom{N}{i}$.
\end{example}

If $d=(0,1,2,3,5)$ then
\[
\pi_d = \begin{bmatrix}
1& \frac{15}{4} & 5 & \frac{5}{2}& .\\
 . & . & . & .&\frac{1}{4}\\
 \end{bmatrix}
\]
and the lower bound is sharp for $\beta_4$.  Similarly if $d=(0,2,3,4,5)$ then the upper bound will be sharp for $\beta_4$.  In general, we do not expect the error bound of $N^{\pm r}$ to be sharp for all $N,r$.  But if we consider $d=(0,1+r,2+r, \dots, N+r)$  then $\beta_N = \frac{(1+r)(2+r)\cdots (N+r)}{(N+r)(N-1)(N-2)\cdots 1} = \binom{N+r-1}{r} \approx \binom{N}{N} \cdot O(N^r)$, and so every $N,r$ there is some pure diagram and Betti number where the upper bound has the right order of magnitude.  Similarly if we consider $\beta_N$ for $d=(0,1,2,\dots,N-1,N+r)$, then we see that the lower bound has the right order of magnitude.

\begin{proof}[Proof of Lemma~\ref{keyLemma}]
Since $d_0= 0, d_N \leq N+r$ and each $d_{i+1} \geq d_i+1$, we have $j \leq d_j \leq d_j+r$ for all $j$.  As we are interested in the $i$'th Betti number, we will write $d_i = i+a$ for some $0\leq a \leq r$.    This adds the further restriction that $j\leq d_j \leq d_j+a$ for $0<j<i$ and that $j+a \leq d_j \leq d_j+r$ for $i<j\leq N$.

Using the formula from \eqref{eqn:herzogkuhl}, we now have
\[
\beta_i(\pi_d) = \frac{\prod_{j=1}^N d_j}{\prod_{j=0}^{i-1} (i+a-d_j)  \cdot \prod_{j=i+1}^{N} (d_j-i-a)}. 
\]
The main idea in this proof (as well as in previous papers like~\cite{erman-beh,mccullough,boocher-wigglesworth}) is to consider this as a rational function in the variables $d_j$ for $j\ne i$, and to use optimization techniques to bound the potential values.  For $0<j<i$ we have $d_j \in [j,j+a]$ and for $i<j<N$ we have $d_j \in [j+a, j+r]$.
We want to determine whether the function is increasing or decreasing in $d_j$ for each $j$.  

Here is one key trick: a positive function $f(x)$ is increasing if and only if $\log f(x)$ is increasing.  We can thus apply $\log$ to the functional expression for $ \beta_i(\pi_d)$, transforming the products into sums, and making it easier to analyze the partial derivatives.  Namely, since
\[
\log \beta_i(\pi_d) = \left( \sum_{j=1}^N \log d_j\right)  -\left( \sum_{ j=0}^{i-1} \log (i+a-d_j) \right) - \left( \sum_{j=i+1}^N \log (d_j-i-a) \right)
\]
we can compute:
\[
\frac{\partial}{\partial d_j} \log \beta_i(\pi_d) = 
\begin{cases}
\frac{1}{d_j} + \frac{1}{i+a-d_j} = \frac{i+a}{d_j(i+a-d_j)} & \text{ if } j < i\\
\frac{1}{d_j}  - \frac{1}{d_j-i-a} = \frac{-i-a}{d_j(d_j-i-a)}& \text{ if } j > i.
\end{cases}
\]
Note that for $0<j<i$ we have $\frac{i+a}{d_j(i+a-d_j)}>0$ for all $d_j \in [j,j+a]$ and thus these functions are always increasing in $d_j$ for $0<j<i$.  Conversely, for $i<j\leq N$ we have $ \frac{-i-a}{d_j(d_j-i-a)}<0$ for all $d_j \in [j+a, j+r]$,and thus  these functions are always decreasing in $d_j$ for $i<j\leq N$.  It follows that the maximal and minimal values are achieved at boundary points.  The maximal value is achieved when $d_j = j+a$ for both $j<i$ and $j>i$.  Thus the maximal value occurs for $d_{\max} = (0,a+1, \dots, N+a)$.  The minimal value occurs at the opposite extremes, yielding $d_{\min}=(0,1,\dots,i-1,i+a,i+r+1,i+r+2, \dots, N+r)$.

To streamline notation in the following computations, we will write $a \cdots b$ for the descending product $a\cdot (a-1)\cdot (a-2)\cdots b$.

{\bf Upper bound:} We have
\begin{align*}
\beta_i(\pi_{d_{\max}})&=
 \prod_{j=1}^N d_j \cdot \frac{1}{\prod_{j=0}^{i-1} (i+a-d_j) } \cdot  \frac{1}{\prod_{j=i+1}^{N} (d_j-i-a)}\\
 &= \tfrac{(N+a)\cdots (a+1)}{1} \cdot \tfrac{1}{(i+a) \cdot (i-1)\cdots 1} \cdot \tfrac{1}{(N-i) \cdots 1}\\
 &=\tfrac{(N+a)!}{a!} \cdot  \frac{1}{(i+a)(i-1)!} \cdot \frac{1}{(N-i)!}\\
 &= \binom{N}{i} \cdot \frac{(N+a)\cdots (N+1)}{a!} \cdot \frac{i}{i+a}\\
 \intertext{Now we apply Lemma~\ref{lem:maxLemma} to obtain $\frac{(N+a)\cdots (N+1)}{a!} \cdot \frac{i}{i+a}\leq N^a$ and since $a\leq r$ we get}
 &\beta_i(\pi_{d_{\max}})\leq \binom{N}{i} \cdot N^a \leq \binom{N}{i}\cdot N^r.
\end{align*}
It follows that for any degree sequence $d$ as in the statement of the lemma, we have
\[
\beta_i(\pi_d) \leq \beta_i(\pi_{d_{\max}}) \leq \binom{N}{i}\cdot N^r.
\]

{\bf Lower bound:}  For the lower bound, recall that we have $d_{\min}:=(0,1,\dots,i-1,i+a,i+r+1,i+r+2, \dots, N+r)$ and so
we have
\begin{align*}
\beta_i(\pi_{d_{\min}}) &=
 \prod_{j=1}^N d_j \cdot \frac{1}{\prod_{j=0}^{i-1} (i+a-d_j) } \cdot  \frac{1}{\prod_{j=i+1}^{N} (d_j-i-a)}\\
&=\tfrac{(N+r) \cdots (i+r+1) \cdot (i+a) \cdot (i-1)!}{1} \cdot \tfrac{1}{(i+a) \cdots (a+1)} \cdot\tfrac{1}{(N+r-i-a)\cdots (r-a+1)}\\
&=\tfrac{(N+r) \cdots (N+1) \cdot N!(i+a)}{(i+r)\cdots i} \cdot \tfrac{a!}{i!(i+a)\cdots (i+1)} \cdot\tfrac{(r-a)!}{ (N-i)! \cdot (N-i+r-a) \cdots (N-i+1)}\\
&=\binom{N}{i}\tfrac{(N+r) \cdots (N+1)\cdot (i+a)}{(i+r)\cdots i} \cdot \tfrac{a!}{(i+a)\cdots (i+1)} \cdot\tfrac{(r-a)!}{ (N-i+r-a) \cdots (N-i+1)}\\
&=\binom{N}{i} \cdot \tfrac{(N+r) \cdots (N+1)}{(i+r)\cdots (i+1)}\cdot \left(\tfrac{i+a}{i} \tfrac{a!}{(i+a)\cdots (i+1)}\right) \cdot\left(\tfrac{(r-a)!}{ (N-i+r-a) \cdots (N-i+1)}\right)\\
\intertext{By Lemma~\ref{lem:maxLemma}, the third term $ \left(\tfrac{i+a}{i} \tfrac{a!}{(i+a)\cdots (i+1)}\right)$ is at least $i^{-a}$.  By Lemma~\ref{lem:descending}, the fourth term $\left(\tfrac{(r-a)!}{ (N-i+r-a) \cdots (N-i+1)}\right)$ is at least $(N-i+1)^{-(r-a)}$.  The second term $\tfrac{(N+r) \cdots (N+1)}{(i+r)\cdots (i+1)}$ can be written as $\prod_{j=1}^r \frac{N+j}{i+j}$ which is at least $1$.  Combining these we get:}
\beta_i(\pi_{d_{\min}}) &\geq \binom{N}{i} \cdot 1 \cdot i^{-a} \cdot (N-i+1)^{r-a}
\intertext{Since $1\leq i \leq N$ we have $i^{-a} \geq N^{-a}$ and $(N-i+1)^{-(r-a)}\geq N^{-(r-a)}$ yielding}
\beta_i(\pi_{d_{\min}}) &\geq \binom{N}{i} \cdot N^{-a} \cdot N^{-(r-a)} =\binom{N}{i} N^{-r}.
\end{align*}
Since $\beta(\pi_d) \geq \beta(\pi_{d_{\min}})$, this completes the proof of the lower bound.
\end{proof}

\begin{lemma}\label{lem:maxLemma}
Fix nonnegative integers $N,i,a$ where $1 \leq N$ and $1\leq i\leq N$.  We have
\[
 \frac{(N+a)\cdots (N+1)}{a!}\frac{i}{i+a} \leq N^a.
 \]
\end{lemma}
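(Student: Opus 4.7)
The plan is to rewrite the left-hand side as a product of $a$ factors, each individually bounded by $N$. Using the telescoping identity $\frac{i}{i+a} = \prod_{j=1}^{a} \frac{i+j-1}{i+j}$ together with $\frac{(N+a)\cdots(N+1)}{a!} = \prod_{j=1}^{a} \frac{N+j}{j}$, the left-hand side equals
\[
\prod_{j=1}^{a} \frac{(N+j)(i+j-1)}{j(i+j)}.
\]
Thus it suffices to establish the single-factor bound $(N+j)(i+j-1) \leq Nj(i+j)$ for each $1 \leq j \leq a$ under the hypothesis $1 \leq i \leq N$.

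This polynomial inequality is where the actual work lies, and I expect it to be the main (if still mild) obstacle. Expanding and simplifying, the difference $Nj(i+j) - (N+j)(i+j-1)$ factors cleanly as $(N-1)\,j\,(i+j-1) - N(i-1)$, so the required bound is equivalent to $(N-1)\,j\,(i+j-1) \geq N(i-1)$. This is immediate when $i=1$ (the right-hand side vanishes), and when $i \geq 2$ (which forces $N \geq 2$) it follows, after using $j(i+j-1) \geq i$, from the hypothesis $N \geq i$. Multiplying the $a$ factor-wise inequalities then gives the stated bound.

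An equivalent route is induction on $a$: the base case $a=0$ is trivial, and the inductive step reduces to showing $(i+a)(N+a+1) \leq N(a+1)(i+a+1)$, which is essentially the same polynomial check. I prefer the telescoping presentation because it exposes in a single line why the correction factor $\frac{i}{i+a}$ is exactly the right size: it absorbs the ``excess'' in each $\frac{N+j}{j}$ to bring each of the $a$ factors down to at most $N$.
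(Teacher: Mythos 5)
Your proof is correct and is essentially the paper's argument recast as a telescoping product: the paper proves the bound by induction on $a$, and its inductive step is precisely your single-factor inequality $\frac{N+j}{j}\cdot\frac{i+j-1}{i+j}\le N$ with $j=a$. The paper verifies that step by first maximizing $\frac{i+a-1}{i+a}$ at $i=N$ and then checking $\frac{N+a-1}{a}\le N$, whereas you expand and factor directly into $(N-1)j(i+j-1)\ge N(i-1)$, but the two checks are equivalent.
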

\begin{proof}
If $a=0$ we have $1=1$.  
Now we will induct on $a$.  We have
\begin{align*}
 \frac{(N+a)\cdots (N+1)}{a!}\frac{i}{i+a} &=  \left( \frac{(N+a-1)\cdots (N+1)}{(a-1)!}\frac{i}{i+a-1} \right) \cdot \frac{N+a}{a} \cdot \frac{i+a-1}{i+a}.
 \intertext{Applying the induction hypothesis to the parenthetical term we get}
 &\leq  \left( N^{a-1} \right) \cdot \frac{N+a}{a} \cdot \frac{i+a-1}{i+a}.
\end{align*}
So we have reduced to proving that $\frac{N+a}{a} \cdot \frac{i+a-1}{i+a} \leq N$.  The term $ \frac{i+a-1}{i+a}$ is largest when $i=N$.  Thus we now have
\[
\frac{N+a}{a} \cdot \frac{i+a-1}{i+a} \leq  \frac{N+a}{a}\cdot \frac{N+a-1}{N+a} = \frac{N+a-1}{a}.
\]
So we have further reduced to proving $\frac{N+a-1}{a}\leq N$ for all $a\geq 1$.  This follows as
\[
\frac{N+a-1}{a}\leq N \iff 0 \leq Na - (N+a-1) = (N-1)(a-1)
\]
which holds because $a,N\geq 1$.
\end{proof}

\begin{lemma}\label{lem:descending}
For $a\geq 0$ and $b\geq 0$, we have $\frac{(a+b)\cdots (a+1)}{b!} \leq (a+1)^b$.
\end{lemma}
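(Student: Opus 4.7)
The plan is to factor the left-hand side as a product of $b$ simple ratios and then bound each ratio separately by $a+1$. First I would observe that the descending product $(a+b)(a+b-1)\cdots(a+1)$ has exactly $b$ factors, so pairing its $j$-th factor from the bottom with the $j$-th factor of $b! = 1\cdot 2 \cdots b$ gives the rewriting
\[
\frac{(a+b)\cdots(a+1)}{b!} = \prod_{j=1}^{b}\frac{a+j}{j}.
\]

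The numerical heart of the proof is then the termwise estimate $\frac{a+j}{j} \leq a+1$ for each $j \geq 1$. This rearranges to $a+j \leq j(a+1) = ja+j$, i.e., $a \leq ja$, which holds whenever $a \geq 0$ and $j \geq 1$. Multiplying these $b$ inequalities together yields $(a+1)^b$ on the right, giving the desired bound. The edge case $b=0$ is handled by the convention that both the empty product in the numerator and $0!$ equal $1$, so both sides of the inequality equal $1$.

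I do not expect any genuine obstacle here: the claim is essentially the well-known inequality $\binom{a+b}{b} \leq (a+1)^b$, repackaged so as to plug directly into the final display of the lower bound argument in the proof of Lemma~\ref{keyLemma}. The only subtlety worth flagging is to verify the boundary values $a = 0$ or $b = 0$ (where one of the ratios $\frac{a+j}{j}$ could equal $1$ rather than be strictly less than $a+1$), but the termwise inequality $\frac{a+j}{j} \leq a+1$ remains valid in these cases, so no separate argument is needed.
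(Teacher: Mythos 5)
Your proof is correct and rests on exactly the same key inequality as the paper's, namely $\frac{a+j}{j}\leq a+1$ for $j\geq 1$ (which the paper verifies via $(a+1)b-(a+b)=a(b-1)\geq 0$). The only cosmetic difference is that you multiply the $b$ termwise bounds directly, whereas the paper phrases the same computation as an induction on $b$; these are the same argument.
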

\begin{proof}
We induct on $b$.  When $b=0$ the statement becomes $1=1$.  More generally we have $\frac{(a+b-1)\cdots (a+1)}{(b-1)!} \leq (a+1)^{b-1}$ by induction.  We claim also that $\frac{a+b}{b}\leq a+1$ for $b\geq 1$ and $a\geq 0$.  This is because
\[
\frac{a+b}{b} \leq a+1 \iff 0 \leq (a+1)b - (a+b) = a(b-1)
\]
which holds when $b\geq 1$ and $a\geq 0$.
Multiplying $\frac{(a+b-1)\cdots (a+1)}{(b-1)!} \leq (a+1)^{b-1}$ and $\frac{a+b}{b}\leq a+1$ gives the desired inequality for $a\geq 0$ and $b \geq 1$, completing the induction step.
\end{proof}

\section{Proofs of Theorems}\label{sec:proofs}
Let us begin with a proof of the main algebraic result.

\begin{proof}[Proof of Theorem~\ref{thm:algebraic}]
Let $\beta(M)=\sum_{i=0}^s c_i\pi_{d^i}$ be the Boij-S\"oderberg decomposition, so that $c_i \in \QQ_{>0}$ and $d^0<d^1<\cdots <d^s$ forms a chain of degree sequences.  Let us rescale $\frac{1}{\beta_0(M)} \cdot \beta(M) = \sum_{i=1}^s \frac{c_i}{\beta_0(M)} \pi_{d^i}$.  Since $\beta_0( \pi_{d^i})=1$ for all pure diagrams, by the definition \eqref{eqn:herzogkuhl}, it follows that $\sum_{i=1}^s \frac{c_i}{\beta_0(M)} = 1$.  Each $d^i$ is a degree sequence of length $t$ where $\codim(M) \leq t \leq \pdim(M)$ by Theorem~\ref{thm:mainBS}.  Moreover, each $\pi_{d^i}$ will involve at most $\reg(M)$ rows, and thus $(d^i)_t\leq t+\reg(M)$.  Lemma~\ref{keyLemma} implies that
\[
\binom{t}{i}t^{-\reg M} \leq \beta_i(\pi_{d^i}) \leq \binom{t}{i}t^{\reg M}.
\]
Incorporating that $\codim M \leq t\leq \pdim M$ we get
\[
\binom{\codim M}{i}(\codim M)^{-\reg M} \leq \beta_i(\pi_{d^i}) \leq \binom{\pdim M}{i}(\pdim M)^{\reg M}.
\]
Since this holds for each pure diagram, and since $\sum_{i=1}^s \frac{c_i}{\beta_0(M)} = 1$, we conclude that
\[
\binom{\codim M}{i}(\codim M)^{-\reg M} \leq \frac{\beta_i(M)}{\beta_0(M)} \leq \binom{\pdim M}{i}(\pdim M)^{\reg M}.
\]
\end{proof}

Before proving Theorem~\ref{thm:mainPn} we will prove the following lemma. The statement is well-known to experts, but we provide an independent proof for the sake of the reader.

\begin{lemma}\label{lem:reg}
With hypotheses as in Theorem~\ref{thm:mainPn}, the Castelnuovo-Mumford regularity of $S/I$ is at most $n$.
\end{lemma}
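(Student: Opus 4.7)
The plan is to compute $\reg(S/I)$ directly from the local cohomology of $S/I$. The first step is to identify $S/I$ with the Veronese subring $R := \bigoplus_{m\ge 0} H^0(\PP^n,\cO_{\PP^n}(md))$; this is the classical fact that the $d$-uple Veronese embedding is projectively normal, so $(S/I)_m = H^0(\PP^n,\cO_{\PP^n}(md))$ for every $m$.

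Next I would determine the local cohomology of $R$ with respect to the irrelevant ideal $\m\subseteq S$. By the standard sheaf-cohomology dictionary for graded rings, one has
\[
H^{i+1}_{\m}(R)_j = H^i(\PP^n,\cO_{\PP^n}(jd)) \quad \text{for } i\ge 1,
\]
while $H^0_\m(R)=0$ (since $R$ is a domain) and $H^1_\m(R)=0$ (by the projective normality identification, the natural map $R_m\to H^0(\PP^n,\cO_{\PP^n}(md))$ is already an isomorphism in every degree). Since $\PP^n$ has no intermediate cohomology, $H^i_\m(R)=0$ for $i\ne n+1$, and in particular this recovers the fact that $R$ is Cohen-Macaulay of dimension $n+1$.

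Finally, I would extract the regularity via the formula $\reg(R) = \max_i\{i + a_i(R)\}$, where $a_i(R):=\sup\{j : H^i_\m(R)_j\ne 0\}$. Only $i=n+1$ contributes, and by Serre duality on $\PP^n$, $H^n(\PP^n,\cO_{\PP^n}(jd))\ne 0$ if and only if $jd\le -(n+1)$, i.e.\ $j\le -\lceil (n+1)/d\rceil$. Thus
\[
\reg(S/I) = (n+1)+a_{n+1}(R) = n+1-\lceil (n+1)/d\rceil \le n,
\]
using $\lceil (n+1)/d\rceil\ge 1$ for every $d\ge 1$. I do not anticipate any real obstacle: once projective normality and the local cohomology dictionary are invoked, the argument is a short bookkeeping exercise. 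An alternative route would be to quote the known explicit resolution of Veronese rings (e.g.\ of Eisenbud-Fl\o ystad-Weyman type), but the local cohomology approach is cleaner and even yields the exact value of $\reg(S/I)$, not just the claimed upper bound.
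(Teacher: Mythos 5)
Your argument is correct and follows essentially the same route as the paper: both compute $\reg(S/I)$ from the local cohomology of the Veronese subring, using the vanishing of $H^j_{\mathfrak m}(S/I)$ for $j\ne n+1$ and the identification $H^{n+1}_{\mathfrak m}(S/I)_e = H^n(\PP^n,\cO_{\PP^n}(ed))$, which is nonzero precisely when $ed\le -(n+1)$. The only cosmetic difference is that you spell out projective normality and derive the Cohen--Macaulay property from the local cohomology vanishing, whereas the paper asserts Cohen--Macaulayness directly and then applies the same formula.
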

\begin{proof}
Let $\mathfrak m$ be the homogeneous maximal ideal of $S$.  For an integer $m$, we have by~\cite[Theorem 4.3]{eisenbud-gos} that $m\geq \reg(S/I)$ if and only if
\[
m\geq \max \{ e  | H^{j}_{\mathfrak m}(S/I)_e \ne 0\} + j \text{ for all } i\geq 0.
\]
Since $S/I$ is a Cohen--Macaulay ring of dimension $n+1$, we have $H^j_{\mathfrak m}(S/I)=0$ for all $j\ne n+1$.  Moreover, the pullback of $\cO_{\PP^{N+n}}(e))$ to $\PP^n$ is $\cO_{\PP^n}(de))$.  We then have $H^{n+1}_{\mathfrak m}(S/I)_e = H^{n}(\PP^n, \cO_{\PP^n}(ed))$ which is nonzero if and only if $ed \leq -n-1$ if and only if $e \leq \lfloor \frac{-n-1}{d}\rfloor$.  It follows that $m$ satisfies the condition of the above displayed equation if and only if $m\geq \lfloor \frac{-n-1}{d}\rfloor+(n+1)$.    Since $\lfloor \frac{-n-1}{d}\rfloor$ is at most $-1$, we conclude that $m=n$ satisfies the desired condition, and thus $\reg(S/I) \leq n$.
\end{proof}

\begin{proof}[Proof of Theorem~\ref{thm:mainPn}]
We will apply Theorem~\ref{thm:algebraic} to $S/I$.  Since $S/I$ is Cohen-Macaulay, we have $\codim S/I = \pdim S/I = N$.  By Lemma~\ref{lem:reg} we have $\reg(S/I) \leq n$.  We thus have
\[
\binom{N}{i} N^{-n} \leq \binom{N}{i} N^{-\reg S/I} \leq \beta_i(S/I) = \beta_i(\PP^n;d) \leq \binom{N}{i} N^{\reg S/I}  \leq  \binom{N}{i} N^n.
\]
\end{proof}
\begin{proof}[Proof of Theorem~\ref{thm:varieties}]
We will apply Theorem~\ref{thm:algebraic} to $S/I$.  We have $X\subseteq \PP^{\dim |L|}$ and thus $S$ is a polynomial ring with $\dim |L|+1$ variables.  We have $\codim S/I = \dim |L| - \dim X$.  Since $S/I$ has depth at least $1$, the Auslander--Buchsbaum Theorem implies that $\pdim S/I \leq \dim S - 1 = \dim |L|.$   The statement then follows immediately from Theorem~\ref{thm:algebraic}.
\end{proof}

\section{Examples}\label{sec:examples}
In this final subsection, we will provide some details on how we made estimates for our various examples.  Our techniques are all well-known and non-optimal, but we include them for clarity.  Imagine that we want to estimate
\[
\log(a! / b!)  =\log(a) + \log(a-1) + \cdots + \log(b+1).
\]
Since $\log$ is an increasing function, we have 
\[
\int_{b}^{a} \log(x) \ dx \leq \log(a) + \log(a-1) + \cdots + \log(b+1) \leq \int_{b+1}^{a+1} \log (x) \ dx.
\]
Since $\int \log(x) \ dx= x\log(x) - x+C$ we obtain
\[
a\log(a) - a - b\log(b) + b  \leq \log(a! / b!) \leq (a+1)\log(a+1) - (a+1) -(b+1)\log(b+1) + (b+1).
\]
For $c>1$, we also have the simpler bounds
\[
c\log(c) - c \leq \log(c!) \leq (c+1)\log(c+1) - (c+1).
\]
Putting this together, with $a=N, b=N-i$ and $c=i$, we get the lower bound
\begin{equation}\label{eqn:logBinomialLower}
N\log(N) -  (N-i)\log(N-i) - (i+1)\log(i+1) +1\leq \log \binom{N}{i}.
\end{equation}
The upper bound is similar.

Let us use this to perform the estimates in Example~\ref{ex:P2million}.  In that case we have
\[
N=\binom{10^6+2}{2} -3 =\frac{1000000 \cdot 999999}{2} - 3 = 500001499998.
\]  
We have chosen $i=10^{11}$ and so want to bound $\binom{N}{10^{11}}$.  Applying \eqref{eqn:logBinomialLower} in this case we get $\log \binom{N}{i} \approx 250201546457.083$.  Converting from natural log to log base $10$ we get $\log_{10} \binom{N}{i} \approx 108661150989.971$.  Incorporating the error of $\log_{10} N^{-2}=-2\log_{10} N$, and rounding down to get an integer, we end up with a lower bound of $108661150967$.  The upper bound computation is similar.

For the Example~\ref{ex:hypersurface}, we have an exact sequence
\[
0\to \cO_{\PP^3}(1000-13) \to \cO_{\PP^3}(1000)\to L\to 0.
\]
We thus have
\[
\dim |L| = \binom{1000+3}{3} - \binom{1000-13 + 3}{3} - 1 = 6441720.
\]
We have chosen $i=10^{6}$ in this case.  By a computation similar to Lemma~\ref{lem:reg}, the coordinate ring of $S/I$ in this example will have regularity $\dim X + 1 =3$.
Applying \eqref{eqn:logBinomialLower} we get $\binom{\dim |L|-\dim X}{i} = \binom{6441720 - 2}{10^6}\approx 10^{1207690}$ and $(\dim |L|)^{-r}\approx 10^{-24}$, yielding the lower bound from that example.  The upper bound estimation is similar.


%

\begin{bibdiv}
\begin{biblist}

\bib{banerjee}{article}{
   author={Banerjee, Arindam},
   author={Yogeshwaran, D.},
   title={Edge ideals of Erd\H os-R\'enyi random graphs: linear resolution,
   unmixedness and regularity},
   journal={J. Algebraic Combin.},
   volume={58},
   date={2023},
   number={4},
   pages={1125--1154},
}
\bib{boij-sod1}{article}{
   author={Boij, Mats},
   author={S\"oderberg, Jonas},
   title={Betti numbers of graded modules and the multiplicity conjecture in
   the non-Cohen-Macaulay case},
   journal={Algebra Number Theory},
   volume={6},
   date={2012},
   number={3},
   pages={437--454},
}

\bib{boij-sod2}{article}{
   author={Boij, Mats},
   author={S\"oderberg, Jonas},
   title={Graded Betti numbers of Cohen-Macaulay modules and the
   multiplicity conjecture},
   journal={J. Lond. Math. Soc. (2)},
   volume={78},
   date={2008},
   number={1},
   pages={85--106},
}

\bib{boocher-wigglesworth}{article}{
   author={Boocher, Adam},
   author={Wigglesworth, Derrick},
   title={Large lower bounds for the betti numbers of graded modules with
   low regularity},
   journal={Collect. Math.},
   volume={72},
   date={2021},
   number={2},
   pages={393--410},
}

\bib{booms-erman-yang}{article}{
   author={Booms-Peot, Caitlyn},
   author={Erman, Daniel},
   author={Yang, Jay},
   title={Characteristic dependence of syzygies of random monomial ideals},
   journal={SIAM J. Discrete Math.},
   volume={36},
   date={2022},
   number={1},
   pages={682--701},
}

\bib{bruce}{article}{
   author={Bruce, Juliette},
   title={The quantitative behavior of asymptotic syzygies for Hirzebruch
   surfaces},
   journal={J. Commut. Algebra},
   volume={14},
   date={2022},
   number={1},
   pages={19--26},
}

\bib{ppcomputations}{article}{
   author={Bruce, Juliette},
   author={Corey, Daniel},
   author={Erman, Daniel},
   author={Goldstein, Steve},
   author={Laudone, Robert P.},
   author={Yang, Jay},
   title={Syzygies of $\Bbb P^1\times\Bbb P^1$: data and conjectures},
   journal={J. Algebra},
   volume={593},
   date={2022},
   pages={589--621},
}

\bib{bruce-erman-goldstein-yang}{article}{
   author={Bruce, Juliette},
   author={Erman, Daniel},
   author={Goldstein, Steve},
   author={Yang, Jay},
   title={Conjectures and computations about Veronese syzygies},
   journal={Exp. Math.},
   volume={29},
   date={2020},
   number={4},
   pages={398--413},
}

\bib{buchs-eis-gor}{article}{
    AUTHOR = {Buchsbaum, David A.},
    AUTHOR = {Eisenbud, David},
     TITLE = {Algebra structures for finite free resolutions, and some
              structure theorems for ideals of codimension {$3$}},
   JOURNAL = {Amer. J. Math.},
  FJOURNAL = {American Journal of Mathematics},
    VOLUME = {99},
      YEAR = {1977},
    NUMBER = {3},
     PAGES = {447--485},
}


\bib{wouter}{article}{
   author={Castryck, Wouter},
   author={Cools, Filip},
   author={Demeyer, Jeroen},
   author={Lemmens, Alexander},
   title={Computing graded Betti tables of toric surfaces},
   journal={Trans. Amer. Math. Soc.},
   volume={372},
   date={2019},
   number={10},
   pages={6869--6903},
}

\bib{CJW}{article}{
   author={Conca, Aldo},
   author={Juhnke-Kubitzke, Martina},
   author={Welker, Volkmar},
   title={Asymptotic syzygies of Stanley-Reisner rings of iterated
   subdivisions},
   journal={Trans. Amer. Math. Soc.},
   volume={370},
   date={2018},
   number={3},
   pages={1661--1691},
   }

\bib{dochtermann}{article}{
   author={Dochtermann, Anton},
   author={Newman, Andrew},
   title={Random subcomplexes and Betti numbers of random edge ideals},
   journal={Int. Math. Res. Not. IMRN},
   date={2023},
   number={10},
   pages={8832--8871},
}

\bib{EEL-quick}{article}{
   author={Ein, Lawrence},
   author={Erman, Daniel},
   author={Lazarsfeld, Robert},
   title={A quick proof of nonvanishing for asymptotic syzygies},
   journal={Algebr. Geom.},
   volume={3},
   date={2016},
   number={2},
   pages={211--222},
}

\bib{EEL-random}{article}{
   author={Ein, Lawrence},
   author={Erman, Daniel},
   author={Lazarsfeld, Robert},
   title={Asymptotics of random Betti tables},
   journal={J. Reine Angew. Math.},
   volume={702},
   date={2015},
   pages={55--75},
}
\bib{EL-higher}{article}{
   author={Ein, Lawrence},
   author={Lazarsfeld, Robert},
   title={Syzygies and Koszul cohomology of smooth projective varieties of
   arbitrary dimension},
   journal={Invent. Math.},
   volume={111},
   date={1993},
   number={1},
   pages={51--67},}

\bib{EL-asymptotic}{article}{
   author={Ein, Lawrence},
   author={Lazarsfeld, Robert},
   title={Asymptotic syzygies of algebraic varieties},
   journal={Invent. Math.},
   volume={190},
   date={2012},
   number={3},
   pages={603--646},
}

\bib{EL-progress-and-questions}{article}{
   author={Ein, Lawrence},
   author={Lazarsfeld, Robert},
   title={Syzygies of projective varieties of large degree: recent progress
   and open problems},
   conference={
      title={Algebraic geometry: Salt Lake City 2015},
   },
   book={
      series={Proc. Sympos. Pure Math.},
      volume={97.1},
      publisher={Amer. Math. Soc., Providence, RI},
   },
   isbn={978-1-4704-3577-6},
   date={2018},
   pages={223--242},
}

\bib{eisenbud-gos}{book}{
   author={Eisenbud, David},
   title={The geometry of syzygies},
   series={Graduate Texts in Mathematics},
   volume={229},
   note={A second course in commutative algebra and algebraic geometry},
   publisher={Springer-Verlag, New York},
   date={2005},
   pages={xvi+243},
}

\bib{eisenbud-erman}{article}{
   author={Eisenbud, David},
   author={Erman, Daniel},
   title={Categorified duality in Boij-S\"oderberg theory and invariants of
   free complexes},
   journal={J. Eur. Math. Soc. (JEMS)},
   volume={19},
   date={2017},
   number={9},
   pages={2657--2695},
}

\bib{filtering}{article}{
   author={Eisenbud, David},
   author={Erman, Daniel},
   author={Schreyer, Frank-Olaf},
   title={Filtering free resolutions},
   journal={Compos. Math.},
   volume={149},
   date={2013},
   number={5},
   pages={754--772},
}

\bib{efw}{article}{
   author={Eisenbud, David},
   author={Fl\o ystad, Gunnar},
   author={Weyman, Jerzy},
   title={The existence of equivariant pure free resolutions},
   language={English, with English and French summaries},
   journal={Ann. Inst. Fourier (Grenoble)},
   volume={61},
   date={2011},
   number={3},
   pages={905--926},
}

\bib{eis-schrey1}{article}{
   author={Eisenbud, David},
   author={Schreyer, Frank-Olaf},
   title={Betti numbers of graded modules and cohomology of vector bundles},
   journal={J. Amer. Math. Soc.},
   volume={22},
   date={2009},
   number={3},
   pages={859--888},
}

\bib{eis-schrey2}{article}{
   author={Eisenbud, David},
   author={Schreyer, Frank-Olaf},
   title={Cohomology of coherent sheaves and series of supernatural bundles},
   journal={J. Eur. Math. Soc. (JEMS)},
   volume={12},
   date={2010},
   number={3},
   pages={703--722},
}

\bib{eis-schrey-icm}{article}{
   author={Schreyer, Frank-Olaf},
   author={Eisenbud, David},
   title={Betti numbers of syzygies and cohomology of coherent sheaves},
   conference={
      title={Proceedings of the International Congress of Mathematicians.
      Volume II},
   },
   book={
      publisher={Hindustan Book Agency, New Delhi},
   },
   isbn={978-81-85931-08-3},
   isbn={978-981-4324-32-8},
   isbn={981-4324-32-9},
   date={2010},
}

\bib{engstrom}{article}{
   author={Engstr\"om, Alexander},
   author={Orlich, Milo},
   title={The regularity of almost all edge ideals},
   journal={Adv. Math.},
   volume={435},
   date={2023},
   pages={Paper No. 109355, 34},
   }

\bib{supernatural}{article}{
   author={Erman, Daniel},
   author={Sam, Steven V.},
   title={Supernatural analogues of Beilinson monads},
   journal={Compos. Math.},
   volume={152},
   date={2016},
   number={12},
   pages={2545--2562},
}

\bib{fmp}{article}{
   author={Fl\o ystad, Gunnar},
   author={McCullough, Jason},
   author={Peeva, Irena},
   title={Three themes of syzygies},
   journal={Bull. Amer. Math. Soc. (N.S.)},
   volume={53},
   date={2016},
   number={3},
   pages={415--435},
}

%
%

\bib{erman-beh}{article}{
   author={Erman, Daniel},
   title={A special case of the Buchsbaum-Eisenbud-Horrocks rank conjecture},
   journal={Math. Res. Lett.},
   volume={17},
   date={2010},
   number={6},
   pages={1079--1089},
}

\bib{erman-yang-flag}{article}{
   author={Erman, Daniel},
   author={Yang, Jay},
   title={Random flag complexes and asymptotic syzygies},
   journal={Algebra Number Theory},
   volume={12},
   date={2018},
   number={9},
   pages={2151--2166},
}

\bib{floystad}{article}{
   author={Fl\o ystad, Gunnar},
   title={Boij-S\"oderberg theory: introduction and survey},
   conference={
      title={Progress in commutative algebra 1},
   },
   book={
      publisher={de Gruyter, Berlin},
   },
   isbn={978-3-11-025034-3},
   date={2012},
   pages={1--54},
}

\bib{floystad-zipping}{article}{
   author={Fl\o ystad, Gunnar},
   title={Zipping Tate resolutions and exterior coalgebras},
   journal={J. Algebra},
   volume={437},
   date={2015},
   pages={249--307},
}

\bib{green1}{article}{
   author={Green, Mark L.},
   title={Koszul cohomology and the geometry of projective varieties. II},
   journal={J. Differential Geom.},
   volume={20},
   date={1984},
   number={1},
   pages={279--289},}


\bib{green-laz2}{article}{
   author={Green, Mark},
   author={Lazarsfeld, Robert},
   title={On the projective normality of complete linear series on an
   algebraic curve},
   journal={Invent. Math.},
   volume={83},
   date={1986},
   number={1},
   pages={73--90},
}

\bib{green-laz1}{article}{
   author={Green, M.},
   author={Lazarsfeld, R.},
   title={Some results on the syzygies of finite sets and algebraic curves},
   journal={Compositio Math.},
   volume={67},
   date={1988},
   number={3},
   pages={301--314},
}

\bib{hartshorne-vb}{article}{
    AUTHOR = {Hartshorne, Robin},
     TITLE = {Algebraic vector bundles on projective spaces: a problem list},
   JOURNAL = {Topology},
  FJOURNAL = {Topology. An International Journal of Mathematics},
    VOLUME = {18},
      YEAR = {1979},
    NUMBER = {2},
     PAGES = {117--128},
      ISSN = {0040-9383},
}

\bib{lazarsfeld-sampling}{article}{
   author={Lazarsfeld, Robert},
   title={A sampling of vector bundle techniques in the study of linear
   series},
   conference={
      title={Lectures on Riemann surfaces},
      address={Trieste},
      date={1987},
   },
   book={
      publisher={World Sci. Publ., Teaneck, NJ},
   },
   isbn={9971-50-902-4},
   date={1989},
   pages={500--559},
}

\bib{M2}{misc}{
          author = {Grayson, Daniel R.}
          author = {Stillman, Michael E.},
          title = {Macaulay2, a software system for research in algebraic geometry},
          howpublished = {Available at \url{http://www2.macaulay2.com}},
          label = {M2},
        }
        
\bib{martinova}{misc}{
   author={Martinova, Boyana},
   title={Asymptotic syzygies of weighted projective spaces},
note = {arXiv: 2510.12708},
}

\bib{mccullough}{article}{
   author={McCullough, Jason},
   title={A polynomial bound on the regularity of an ideal in terms of half
   of the syzygies},
   journal={Math. Res. Lett.},
   volume={19},
   date={2012},
   number={3},
   pages={555--565},
}

\bib{park}{article}{
   author={Park, Jinhyung},
   title={Asymptotic vanishing of syzygies of algebraic varieties},
   journal={Commun. Am. Math. Soc.},
   volume={2},
   date={2022},
   pages={133--148},
}

\bib{park-nonvanishing}{article}{
   author={Park, Jinhyung},
   title={Asymptotic nonvanishing of syzygies of algebraic varieties},
   journal={Math. Ann.},
   volume={392},
   date={2025},
   number={1},
   pages={751--779},
}

\bib{raicu}{article}{
   author={Raicu, Claudiu},
   title={Representation stability for syzygies of line bundles on
   Segre-Veronese varieties},
   journal={J. Eur. Math. Soc. (JEMS)},
   volume={18},
   date={2016},
   number={6},
   pages={1201--1231},
}
%

\bib{walker}{article}{
   author={Walker, Mark E.},
   title={Total Betti numbers of modules of finite projective dimension},
   journal={Ann. of Math. (2)},
   volume={186},
   date={2017},
   number={2},
   pages={641--646},
}

\bib{zhou}{article}{
   author={Zhou, Xin},
   title={Effective non-vanishing of asymptotic adjoint syzygies},
   journal={Proc. Amer. Math. Soc.},
   volume={142},
   date={2014},
   number={7},
   pages={2255--2264},
}

\end{biblist}
\end{bibdiv}

\end{document}